\newcommand{\R}{\mathbb{R}}
\newcommand{\M}{\mathcal{M}}
\DeclareMathOperator{\Lip}{Lip}
\DeclareMathOperator{\support}{supp}
\newtheorem{proposition}{Proposition}
\newtheorem{corollary}[proposition]{Corollary}
\newtheorem*{corollary*}{Corollary}
\newtheorem{theorem}[proposition]{Theorem}
\newtheorem*{theorem*}{Theorem}
\theoremstyle{definition}
\theoremstyle{remark}
\title{Families of extensions of the Kantorovich-Rubinstein and Lipschitz norms}
\author{
  D\'avid Terj\'ek \\
  Alfr\'ed R\'enyi Institute of Mathematics\\
  Budapest, Hungary\\
  \texttt{dterjek@renyi.hu} \\
}
\begin{document}

\maketitle

\begin{abstract}
We propose a family of extensions of the Kantorovich-Rubinstein norm from the space of zero-charge countably additive measures on a compact metric space to the space of all countably additive measures, and a family of extensions of the Lipschitz norm from the quotient space of Lipschitz functions on a compact metric space to the space of all Lipschitz functions. These families are parameterized by $p,q \in [1,\infty]$, and if $p,q$ are Hölder conjugates, then the dual of the resulting $p$-Kantorovich space is isometrically isomorphic to the resulting $q$-Lipschitz space.
\end{abstract}

\section{Introduction}

Given a compact metric space $(X,d)$, the vector space $\M(X,0)$ of countably additive measures $\mu$ on the Borel $\sigma$-algebra of $X$ such that $\mu(X)=0$ can be normed by the Kantorovich-Rubinstein norm
\[
\Vert \mu \Vert_{KR} = \inf_{\pi \in \M(X \times X), \pi(X \times \cdot) - \pi(\cdot \times X) = \mu}\left\{ \int d(x,y) d\pi(x,y) \right\},
\]
the theory of which was developed in \cite{Kantorovichetal1957} and \cite{Kantorovichetal1958}. See \cite[Section~VIII.4]{Kantorovichetal1982} or \cite[Section~8.4]{Cobzasetal2019} for summaries. The topological dual space can be identified with the quotient space of Lipschitz functions with respect to constant functions, or equivalently the space $\Lip(X,x_0)$ of Lipschitz functions vanishing at an arbitrary base point $x_0=X$, equipped with the Lipschitz norm
\[
\Vert f \Vert_L = \sup_{x,y \in X, x \neq y}\left\{ \frac{\vert f(x)-f(y) \vert}{d(x,y)} \right\}.
\]

We propose to extend the Kantorovich-Rubinstein norm to the space $\M(X)$ of all countably additive measures on $X$ as
\begin{equation} \label{p_norm}
\Vert \mu \Vert_{pK} = \inf_{\xi \in \M(X,0)}\left\{ (\Vert \xi \Vert_{KR}^p + \Vert \mu - \xi \Vert_{TV}^p)^{\frac{1}{p}} \right\}
\end{equation}
with $p \in [1,\infty]$ and $\Vert.\Vert_{TV}$ being the total variation norm, and to extend the Lipschitz norm to the space $\Lip(X)$ of all Lipschitz functions on $X$ as
\begin{equation} \label{q_norm}
\Vert f \Vert_{qL} = (\Vert f \Vert_L^q + \Vert f \Vert_\infty^q)^{\frac{1}{q}}
\end{equation}
with $q \in [1,\infty]$ and $\Vert.\Vert_\infty$ being the sup norm. The limiting cases $p,q=\infty$ are interpreted as usual. In the following, we will prove some properties of the resulting $p$-Kantorovich $(\M(X),\Vert.\Vert_{pK})$ and $q$-Lipschitz $(Lip(X), \Vert . \Vert_{qL})$ spaces, including that if $p,q$ are Hölder conjugates, i.e. $\frac{1}{p}+\frac{1}{q}=1$, then the topological dual of the $p$-Kantorovich space can be identified with the $q$-Lipschitz space.

The theory of the $p=1$ and $q=\infty$ case was developed by L. G. Hanin in a series of papers \cite{Hanin1992, Hanin1994, Hanin1997, Hanin1999}. See \cite[Section~8.5]{Cobzasetal2019} for a summary. The case $p=\infty$ and $q=1$ was proposed in dual form in \cite{Chitescuetal2014}. Equivalence to the primal form was shown in \cite{Terjek2021}. For $\mu,\nu$ probability measures, $\Vert \mu - \nu \Vert_{\infty K}$ is also known as the Fortet-Mourier distance.

\section{$p$-Kantorovich and $q$-Lipschitz norms}


The following propositions show that \eqref{p_norm} and \eqref{q_norm} define families of equivalent norms, with the spaces $(\M(X),\Vert.\Vert_{pK})$ and $(Lip(X), \Vert . \Vert_{qL})$ being incomplete and complete, respectively, while the pointwise product turns $q$-Lipschitz spaces into Gelfand algebras.

\begin{proposition} \label{prop_separating}
Given Hölder conjugates $p,q \in [1,\infty]$, for any $\mu \in \M(X)$ and $f \in \Lip(X)$, one has
\begin{equation} \label{separating}
\left\vert \int f d\mu \right\vert \leq \Vert \mu \Vert_{pK} \Vert f \Vert_{qL}.
\end{equation}
\end{proposition}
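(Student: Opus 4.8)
The plan is to exploit two classical duality estimates---the Kantorovich-Rubinstein duality on $\M(X,0)$ and the total variation duality---and then combine them through the finite-dimensional Hölder inequality. The starting point is that both sides of \eqref{separating} behave well under a splitting of $\mu$: for \emph{any} $\xi \in \M(X,0)$ one may write $\mu = \xi + (\mu - \xi)$, and correspondingly decompose $\int f \, d\mu = \int f \, d\xi + \int f \, d(\mu - \xi)$.

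First I would bound the two pieces separately. For the zero-charge part $\xi$, the Kantorovich-Rubinstein duality recalled in the introduction (the identification of $(\M(X,0),\Vert.\Vert_{KR})^{*}$ with the Lipschitz quotient space) gives $\left\vert \int f \, d\xi \right\vert \leq \Vert \xi \Vert_{KR} \Vert f \Vert_L$; note that only the Lipschitz seminorm $\Vert f \Vert_L$ enters here, which is precisely what makes the KR norm---defined only for zero-charge measures---the right tool for this term. For the remaining part $\mu - \xi$, the elementary total variation estimate gives $\left\vert \int f \, d(\mu - \xi) \right\vert \leq \Vert \mu - \xi \Vert_{TV} \Vert f \Vert_\infty$. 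Adding these via the triangle inequality yields
\[
\left\vert \int f \, d\mu \right\vert \leq \Vert \xi \Vert_{KR} \Vert f \Vert_L + \Vert \mu - \xi \Vert_{TV} \Vert f \Vert_\infty.
\]

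The second key step is to read the right-hand side as the standard pairing of the two vectors $(\Vert \xi \Vert_{KR}, \Vert \mu - \xi \Vert_{TV})$ and $(\Vert f \Vert_L, \Vert f \Vert_\infty)$ in $\R^2$, so that Hölder's inequality for the conjugate exponents $p,q$ bounds it by $(\Vert \xi \Vert_{KR}^p + \Vert \mu - \xi \Vert_{TV}^p)^{1/p}(\Vert f \Vert_L^q + \Vert f \Vert_\infty^q)^{1/q}$. By \eqref{q_norm} the second factor is exactly $\Vert f \Vert_{qL}$. Finally, since the left-hand side $\left\vert \int f \, d\mu \right\vert$ does not depend on $\xi$, I would take the infimum over all $\xi \in \M(X,0)$ of the right-hand side; by \eqref{p_norm} the first factor then becomes $\Vert \mu \Vert_{pK}$, which yields \eqref{separating}.

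I do not anticipate a genuine obstacle: the argument is a single decomposition followed by two standard dualities and a two-term Hölder inequality. The only points needing minor care are the limiting cases, where Hölder degenerates into $ab + cd \leq (a+c)\max(b,d)$ when $p=1,q=\infty$ and into $ab+cd \leq \max(a,c)(b+d)$ when $p=\infty,q=1$---both immediate---and the passage to the infimum, which is legitimate precisely because the preceding estimate holds simultaneously for every admissible $\xi$.
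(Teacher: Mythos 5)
Your proposal is correct and coincides with the paper's own proof: the same decomposition $\mu = \xi + (\mu - \xi)$, the same two dual estimates via $\Vert \xi \Vert_{KR}\Vert f \Vert_L$ and $\Vert \mu - \xi \Vert_{TV}\Vert f \Vert_\infty$, the same application of H\"older's inequality in $\R^2$, and the same passage to the infimum over $\xi$ using the definition of $\Vert.\Vert_{pK}$. Your extra remarks on the limiting cases $p=1,q=\infty$ and $p=\infty,q=1$ are a harmless elaboration of what the paper subsumes under ``the H\"older inequality for $\R^2$.''
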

\begin{proof}
For any $f \in \Lip(X)$, $\mu \in \M(X)$ and $\xi \in \M(X,0)$, one has
\[
\left\vert \int f d\mu \right\vert 
\leq \left\vert \int f d\xi \right\vert + \left\vert \int f d(\mu-\xi) \right\vert
\leq \Vert f \Vert_L \Vert \xi \Vert_{KR} + \Vert f \Vert_\infty \Vert \mu-\xi \Vert_{TV}
\]
\[
\leq (\Vert f \Vert_L^q + \Vert f \Vert_\infty^q)^{\frac{1}{q}} (\Vert \xi \Vert_{KR}^p + \Vert \mu-\xi \Vert_{TV}^p)^{\frac{1}{p}}
= \Vert f \Vert_{qL} (\Vert \xi \Vert_{KR}^p + \Vert \mu-\xi \Vert_{TV}^p)^{\frac{1}{p}}
\]
where the Hölder inequality for $\R^2$ was used. Since this holds for any $\xi$, the proposition follows from the definition of $\Vert.\Vert_{pK}$.
\end{proof}

\begin{proposition}
Given $p,q \in [1,\infty]$, $\Vert . \Vert_{pK}$ is a norm on $\M(X)$ and $\Vert . \Vert_{qL}$ is a norm on $\Lip(X)$.
\end{proposition}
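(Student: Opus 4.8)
The plan is to verify the three norm axioms---absolute homogeneity, positive definiteness, and the triangle inequality---for each functional, treating $\Vert\cdot\Vert_{qL}$ first since its structure is the more transparent. The key observation is that $\Vert\cdot\Vert_L$ is a seminorm and $\Vert\cdot\Vert_\infty$ a norm on $\Lip(X)$, so that $\Vert f\Vert_{qL}$ is nothing but the image of the pair $(\Vert f\Vert_L,\Vert f\Vert_\infty)$ under the $\ell^q$-norm on $\R^2$. Homogeneity is then immediate from that of its three constituents. For definiteness, $\Vert f\Vert_{qL}=0$ forces both coordinates to vanish, and in particular $\Vert f\Vert_\infty=0$, whence $f=0$. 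For the triangle inequality I would chain two facts: first the subadditivity of $\Vert\cdot\Vert_L$ and $\Vert\cdot\Vert_\infty$ to bound each coordinate of $(\Vert f+g\Vert_L,\Vert f+g\Vert_\infty)$, then the monotonicity of the $\ell^q$-norm in each nonnegative coordinate together with Minkowski's inequality in $\R^2$.

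For $\Vert\cdot\Vert_{pK}$, homogeneity follows by the change of variables $\xi=\lambda\eta$ in the defining infimum: since $\M(X,0)$ is a linear subspace it is preserved by this substitution, and factoring $\vert\lambda\vert$ out of both $\Vert\lambda\eta\Vert_{KR}$ and $\Vert\lambda\mu-\lambda\eta\Vert_{TV}$ recovers $\vert\lambda\vert\,\Vert\mu\Vert_{pK}$, the case $\lambda=0$ being handled by taking $\xi=0$. The triangle inequality looks hardest because of the infimum, but it collapses neatly: given $\xi_1,\xi_2\in\M(X,0)$, I would test the infimum defining $\Vert\mu+\nu\Vert_{pK}$ against the admissible choice $\xi=\xi_1+\xi_2$, bound $\Vert\xi\Vert_{KR}$ and $\Vert\mu+\nu-\xi\Vert_{TV}$ by subadditivity of the KR and TV norms, and then apply Minkowski's inequality in $\R^2$ to the vectors $(\Vert\xi_1\Vert_{KR},\Vert\mu-\xi_1\Vert_{TV})$ and $(\Vert\xi_2\Vert_{KR},\Vert\nu-\xi_2\Vert_{TV})$; taking the infimum over $\xi_1$ and $\xi_2$ separately then yields $\Vert\mu+\nu\Vert_{pK}\le\Vert\mu\Vert_{pK}+\Vert\nu\Vert_{pK}$.

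The genuine obstacle is the definiteness of $\Vert\cdot\Vert_{pK}$, i.e. showing that $\Vert\mu\Vert_{pK}=0$ implies $\mu=0$; this is the one step where the formula cannot simply be manipulated. Here I would invoke Proposition \ref{prop_separating}: if $\Vert\mu\Vert_{pK}=0$, then $\vert\int f\,d\mu\vert\le\Vert\mu\Vert_{pK}\Vert f\Vert_{qL}=0$ for every $f\in\Lip(X)$, so $\mu$ annihilates all Lipschitz functions. Since the Lipschitz functions are dense in $C(X)$ in the sup norm on a compact metric space, $\mu$ annihilates all of $C(X)$, and the Riesz representation theorem gives $\mu=0$. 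As a self-contained alternative avoiding density, one can argue from a minimizing sequence $\xi_n\in\M(X,0)$ with $\Vert\xi_n\Vert_{KR}\to0$ and $\Vert\mu-\xi_n\Vert_{TV}\to0$: the latter forces $\mu(X)=\lim_n(\mu-\xi_n)(X)=0$, so $\mu\in\M(X,0)$, and combining $\Vert\xi_n\Vert_{KR}\to0$ with the comparison $\Vert\mu-\xi_n\Vert_{KR}\le\tfrac12\operatorname{diam}(X)\Vert\mu-\xi_n\Vert_{TV}\to0$ yields $\Vert\mu\Vert_{KR}=0$, hence $\mu=0$ because $\Vert\cdot\Vert_{KR}$ is a norm on $\M(X,0)$.
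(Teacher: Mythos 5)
Your proof is correct. The paper disposes of this proposition in two sentences: the seminorm axioms are declared clear, and the separating property of both functionals is extracted from Proposition~\ref{prop_separating} --- if $\Vert \mu \Vert_{pK}=0$ then $\int f\,d\mu=0$ for every $f \in \Lip(X)$, hence $\mu=0$, and if $\Vert f \Vert_{qL}=0$ then testing against Dirac measures gives $f=0$. Your first argument for the definiteness of $\Vert\cdot\Vert_{pK}$ is precisely this route, with the step the paper leaves implicit (a measure annihilating all Lipschitz functions vanishes, via density of $\Lip(X)$ in $C(X)$ and Riesz representation) spelled out; your verification of homogeneity and the triangle inequality via Minkowski in $\R^2$ is likewise just the detailed version of what the paper calls ``clearly seminorms.'' Two points where you genuinely diverge: for $\Vert\cdot\Vert_{qL}$ you get definiteness directly from the coordinate $\Vert f\Vert_\infty$, needing no duality at all, which is simpler than routing through Proposition~\ref{prop_separating}; and your self-contained alternative for $\Vert\cdot\Vert_{pK}$ --- extracting from a minimizing sequence that $\mu(X)=0$ and then $\Vert\mu\Vert_{KR}=0$ via the comparison $\Vert\xi\Vert_{KR}\le\tfrac12\operatorname{diam}(X)\Vert\xi\Vert_{TV}$ on $\M(X,0)$ --- avoids the duality inequality entirely, at the price of invoking the classical (nontrivial) fact that $\Vert\cdot\Vert_{KR}$ is a norm on $\M(X,0)$. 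Both the comparison inequality and that fact are correct, so this second route is also sound; it buys independence from Proposition~\ref{prop_separating}, whereas the paper's route buys brevity by reusing a result it needs anyway for the duality theorem.
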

\begin{proof}
Both functionals are clearly seminorms. By Proposition~\ref{prop_separating} they are separating, so that they are norms as well.
\end{proof}

\begin{proposition} \label{prop_equivalent_norms}
Given $p_1,p_2 \in [1,\infty]$, the norms $\Vert.\Vert_{p_1K}$ and $\Vert.\Vert_{p_2K}$ on $\M(X)$ are equivalent. Given $q_1,q_2 \in [1,\infty]$, the norms $\Vert.\Vert_{q_1L}$ and $\Vert.\Vert_{q_2L}$ on $\Lip(X)$ are equivalent.
\end{proposition}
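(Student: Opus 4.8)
The plan is to show that for each pair of parameters, the two norms differ only by a constant factor that comes from the equivalence of $\ell^p$-norms on $\R^2$. The key observation is that both families are built out of the same two underlying quantities. For the $q$-Lipschitz family this is transparent: $\Vert f \Vert_{qL}$ is literally the $\ell^q$-norm of the vector $(\Vert f \Vert_L, \Vert f \Vert_\infty) \in \R^2$, and for the $p$-Kantorovich family, each candidate value $(\Vert \xi \Vert_{KR}^p + \Vert \mu - \xi \Vert_{TV}^p)^{1/p}$ is the $\ell^p$-norm of $(\Vert \xi \Vert_{KR}, \Vert \mu - \xi \Vert_{TV}) \in \R^2$ before taking the infimum over $\xi$.

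First I would handle the Lipschitz case, which is the cleaner of the two. Since all norms on the finite-dimensional space $\R^2$ are equivalent, there exist constants $0 < c \leq C$ such that $c \Vert v \Vert_{q_2} \leq \Vert v \Vert_{q_1} \leq C \Vert v \Vert_{q_2}$ for every $v \in \R^2$; concretely one can take the explicit constants relating $\ell^{q_1}$ and $\ell^{q_2}$ norms in dimension $2$. Applying this to the vector $v = (\Vert f \Vert_L, \Vert f \Vert_\infty)$ gives $c \Vert f \Vert_{q_2 L} \leq \Vert f \Vert_{q_1 L} \leq C \Vert f \Vert_{q_2 L}$ directly, since the vector does not depend on $q$.

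For the Kantorovich case the same $\R^2$-norm-equivalence constants apply to each fixed $\xi$, yielding $c (\Vert \xi \Vert_{KR}^{p_2} + \Vert \mu - \xi \Vert_{TV}^{p_2})^{1/p_2} \leq (\Vert \xi \Vert_{KR}^{p_1} + \Vert \mu - \xi \Vert_{TV}^{p_1})^{1/p_1} \leq C(\Vert \xi \Vert_{KR}^{p_2} + \Vert \mu - \xi \Vert_{TV}^{p_2})^{1/p_2}$. The extra step here, and the one requiring a little care, is that $\Vert \mu \Vert_{pK}$ involves an infimum over $\xi$, so I would take the infimum across these inequalities: the right-hand inequality, holding for every $\xi$, passes to the infimum to give $\Vert \mu \Vert_{p_1 K} \leq C \Vert \mu \Vert_{p_2 K}$, and symmetrically the left-hand inequality gives $c \Vert \mu \Vert_{p_2 K} \leq \Vert \mu \Vert_{p_1 K}$. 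Since the multiplicative constants are uniform in $\xi$, they survive the infimum without change.

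I do not expect a genuine obstacle here, as the argument is essentially finite-dimensional norm equivalence transported through the definitions; the only point demanding attention is verifying that one may pass the uniform constants through the infimum in the Kantorovich case and correctly handling the limiting cases $p,q = \infty$, where the $\ell^\infty$-norm on $\R^2$ replaces the $\ell^p$-expression but the equivalence of norms on $\R^2$ still applies verbatim.
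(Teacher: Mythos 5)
Your proposal is correct and follows essentially the same route as the paper: the paper also reduces everything to the equivalence of $\ell^{p}$-norms on $\R^2$ (instantiated concretely via the max norm, giving constants $2^{-1/p_1}$ and $2^{1/p_2}$) applied to the vectors $(\Vert \xi \Vert_{KR}, \Vert \mu - \xi \Vert_{TV})$ and $(\Vert f \Vert_L, \Vert f \Vert_\infty)$, uniformly in $\xi$. Your explicit remark that the uniform constants pass through the infimum over $\xi$ is a point the paper leaves implicit, but it is the same argument.
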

\begin{proof}
Suppose that $p_1 \leq p_2$ and $q_1 \leq q_2$. One has
\[
2^{-\frac{1}{p_1}} (\Vert \xi \Vert_{KR}^{p_1} + \Vert \mu - \xi \Vert_{TV}^{p_1})^{\frac{1}{p_1}}
\leq \max\{ \Vert \xi \Vert_{KR}, \Vert \mu - \xi \Vert_{TV} \}
\leq (\Vert \xi \Vert_{KR}^{p_2} + \Vert \mu - \xi \Vert_{TV}^{p_2})^{\frac{1}{p_2}}
\]
\[
\leq 2^{\frac{1}{p_2}} \max\{ \Vert \xi \Vert_{KR}, \Vert \mu - \xi \Vert_{TV} \}
\leq 2^{\frac{1}{p_2}} (\Vert \xi \Vert_{KR}^{p_1} + \Vert \mu - \xi \Vert_{TV}^{p_1})^{\frac{1}{p_1}}
\]
for any $\mu \in \M(X)$ and $\xi \in \M(X,0)$. Similarly, one has
\[
2^{-\frac{1}{q_1}} (\Vert f \Vert_L^{q_1} + \Vert f \Vert_\infty^{q_1})^{\frac{1}{q_1}}
\leq \max\{ \Vert f \Vert_L, \Vert f \Vert_\infty \}
\leq (\Vert f \Vert_L^{q_2} + \Vert f \Vert_\infty^{q_2})^{\frac{1}{q_2}}
\]
\[
\leq 2^{\frac{1}{q_2}} \max\{ \Vert f \Vert_L, \Vert f \Vert_\infty \}
\leq 2^{\frac{1}{q_2}} (\Vert f \Vert_L^{q_1} + \Vert f \Vert_\infty^{q_1})^{\frac{1}{q_1}}
\]
for any $f \in \Lip(X)$.
\end{proof}

\begin{proposition} \label{prop_completeness}
Given $p,q \in [1,\infty]$, the normed space $(\M(X),\Vert.\Vert_{pK})$ is not complete if $X$ is an infinite set, while the normed space $(Lip(X), \Vert . \Vert_{qL})$ is complete for any $X$, i.e. a Banach space.
\end{proposition}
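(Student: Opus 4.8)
The plan is to use Proposition~\ref{prop_equivalent_norms}: since completeness is invariant under passing to an equivalent norm, it would suffice to treat a single convenient value of each parameter, although the arguments below in fact work uniformly in $p$ and $q$. The completeness of the $q$-Lipschitz space is the routine half; the incompleteness of the $p$-Kantorovich space is where the real work lies.

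For completeness of $(\Lip(X),\Vert\cdot\Vert_{qL})$, I would start from the elementary domination $\Vert f\Vert_\infty\le\Vert f\Vert_{qL}$ and $\Vert f\Vert_L\le\Vert f\Vert_{qL}$, both immediate from \eqref{q_norm}. Given a $\Vert\cdot\Vert_{qL}$-Cauchy sequence $(f_n)$, the first inequality makes it uniformly Cauchy, so it converges uniformly to some $f\in C(X)$; the second shows $\sup_n\Vert f_n\Vert_L<\infty$, whence passing to the limit in each difference quotient $\frac{\vert f_n(x)-f_n(y)\vert}{d(x,y)}$ gives $f\in\Lip(X)$. To upgrade uniform convergence to norm convergence, fix $\varepsilon>0$ and $N$ with $\Vert f_n-f_m\Vert_L\le\varepsilon$ for $n,m\ge N$; holding $n$ and a pair $x\ne y$ fixed and letting $m\to\infty$ yields $\Vert f_n-f\Vert_L\le\varepsilon$, and combined with $\Vert f_n-f\Vert_\infty\to0$ this gives $\limsup_n\Vert f_n-f\Vert_{qL}\le\varepsilon$. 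As $\varepsilon$ is arbitrary, $f_n\to f$ in $\Vert\cdot\Vert_{qL}$.

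For the incompleteness of $(\M(X),\Vert\cdot\Vert_{pK})$ when $X$ is infinite, I would exhibit an explicit non-convergent Cauchy sequence. Since $X$ is compact and infinite it contains a sequence of distinct points $x_n$ converging to a point $x_\infty$ distinct from every $x_n$, and after passing to a subsequence we may assume $d(x_n,x_\infty)\le 2^{-n}$. Put $\mu_N=\sum_{n=1}^N(\delta_{x_n}-\delta_{x_\infty})\in\M(X,0)$. Using the coupling $\delta_{(x_\infty,x_n)}$ one gets $\Vert\delta_{x_n}-\delta_{x_\infty}\Vert_{KR}\le d(x_n,x_\infty)$, so subadditivity of $\Vert\cdot\Vert_{KR}$ together with the bound $\Vert\cdot\Vert_{pK}\le\Vert\cdot\Vert_{KR}$ on $\M(X,0)$ (take $\xi=\mu$ in \eqref{p_norm}) gives $\Vert\mu_N-\mu_M\Vert_{pK}\le\sum_{n=M+1}^N d(x_n,x_\infty)\le 2^{-M}$ for $N>M$; hence $(\mu_N)$ is Cauchy.

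Suppose for a contradiction that $\mu_N\to\mu$ in $\Vert\cdot\Vert_{pK}$ for some $\mu\in\M(X)$. By Proposition~\ref{prop_separating} each $f\in\Lip(X)$ is a $\Vert\cdot\Vert_{pK}$-continuous functional, so $\int f\,d\mu=\lim_N\int f\,d\mu_N=\lim_N\sum_{n=1}^N(f(x_n)-f(x_\infty))$. The key step, and the main obstacle, is to extract from this enough information to violate finiteness of $\mu$. Fixing $k$, the distinctness of the points and $x_n\to x_\infty\ne x_k$ give $\rho_k:=\inf(\{d(x_k,x_n):n\ne k\}\cup\{d(x_k,x_\infty)\})>0$; for $0<\varepsilon<\rho_k$ the Lipschitz bump $g_\varepsilon(x)=\max\{0,1-\varepsilon^{-1}d(x,x_k)\}$ satisfies $g_\varepsilon(x_k)=1$ and $g_\varepsilon(x_n)=g_\varepsilon(x_\infty)=0$ for $n\ne k$, so that each partial sum with $N\ge k$ equals $1$ and $\int g_\varepsilon\,d\mu=1$. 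Letting $\varepsilon\downarrow0$, the $g_\varepsilon$ converge pointwise to the indicator of $\{x_k\}$ and are dominated by $1\in L^1(\vert\mu\vert)$, so dominated convergence yields $\mu(\{x_k\})=1$. As this holds for every $k$ and the $x_k$ are distinct, $\Vert\mu\Vert_{TV}\ge\sum_k\vert\mu(\{x_k\})\vert=\infty$, contradicting $\mu\in\M(X)$. Thus $(\mu_N)$ has no limit in $(\M(X),\Vert\cdot\Vert_{pK})$ and the space is incomplete.
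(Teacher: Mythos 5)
Your proof is correct, and while the completeness half matches the paper, the incompleteness half takes a genuinely different route. For $(\Lip(X),\Vert\cdot\Vert_{qL})$ you and the paper argue the same way: a $\Vert\cdot\Vert_{qL}$-Cauchy sequence is Cauchy in both $\Vert\cdot\Vert_\infty$ and $\Vert\cdot\Vert_L$, the uniform limit is Lipschitz, and the $\Vert\cdot\Vert_L$-Cauchy estimate upgrades uniform convergence to $\Vert\cdot\Vert_{qL}$-convergence; the paper outsources that last step to \cite[Lemma~8.1.4]{Cobzasetal2019}, whereas you prove it inline by letting $m\to\infty$ in the difference quotients, which is fine. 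For incompleteness of $(\M(X),\Vert\cdot\Vert_{pK})$, the paper argues softly: $\Vert\delta_x-\delta_{x_n}\Vert_{pK}\le d(x,x_n)\to 0$ while $\Vert\delta_x-\delta_{x_n}\Vert_{TV}=2$ shows the two norms are inequivalent, and since $(\M(X),\Vert\cdot\Vert_{TV})$ is a Banach space and $\Vert\cdot\Vert_{pK}\le\Vert\cdot\Vert_{TV}$, completeness of $(\M(X),\Vert\cdot\Vert_{pK})$ would force equivalence by the Banach isomorphism theorem, a contradiction. You instead exhibit an explicit Cauchy sequence with no limit: $\mu_N=\sum_{n=1}^N(\delta_{x_n}-\delta_{x_\infty})$ is $\Vert\cdot\Vert_{pK}$-Cauchy by the coupling bound and $\Vert\cdot\Vert_{pK}\le\Vert\cdot\Vert_{KR}$ on $\M(X,0)$, and any limit $\mu$ would have to satisfy $\mu(\{x_k\})=1$ for every $k$ (via the separation radii $\rho_k>0$, the Lipschitz bumps $g_\varepsilon$, the $\Vert\cdot\Vert_{pK}$-continuity of integration against Lipschitz functions from Proposition~\ref{prop_separating}, and dominated convergence), contradicting finiteness of $\Vert\mu\Vert_{TV}$; all of these steps check out. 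The trade-off: the paper's argument is much shorter but non-constructive, leaning on the open mapping theorem and the completeness of the total variation norm; yours is longer but elementary and self-contained, and it shows concretely what the space is missing --- the would-be limit is an ``infinite mass'' object, which is a more informative picture of why completeness fails.
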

\begin{proof}
The following is an adaptation of the proof of \cite[Theorem~8.4.7]{Cobzasetal2019}. If $X$ is infinite, it has an accumulation point $x$. Let $(x_n)$ be a sequence in $X \setminus \{x\}$ converging to $x$. Since $\Vert \delta_x - \delta_{x_n} \Vert_{pK} \leq d(x,x_n)$, one has $\lim_{n \to \infty} \Vert \delta_x - \delta_{x_n} \Vert_{pK} = 0$. On the other hand, $\Vert \delta_x - \delta_{x_n} \Vert_{TV} = 2$, so that $\delta_{x_n}$ does not converge to $\delta_x$ with respect to the topology induced by $\Vert.\Vert_{TV}$, meaning that $\Vert.\Vert_{TV}$ and $\Vert.\Vert_{pK}$ are not equivalent norms. If $(\M(X),\Vert.\Vert_{pK})$ was complete, the identity operator from $(\M(X),\Vert.\Vert_{pK})$ to $(\M(X),\Vert.\Vert_{TV})$ would be an isomoprhism by the Banach isomorphism theorem, which would imply the equivalence of the norms $\Vert.\Vert_{TV}$ and $\Vert.\Vert_{pK}$.

The following is an adaptation of the proof of \cite[Theorem~8.1.3]{Cobzasetal2019}. Suppose that $(f_n)$ is a Cauchy sequence with respect to $\Vert . \Vert_{qL}$. Then it is also Cauchy with respect to $\Vert . \Vert_\infty$, hence converges uniformly to some bounded function $f : X \to \R$. Since it is also Cauchy with respect to $\Vert . \Vert_L$, by \cite[Lemma~8.1.4]{Cobzasetal2019} it also converges to the same $f$ with respect to $\Vert . \Vert_L$, and $f$ is Lipschitz. Consequently, $(f_n)$ converges to $f$ with respect to both $\Vert . \Vert_\infty$ and $\Vert . \Vert_L$, hence with respect to $\Vert . \Vert_{qL}$ as well, so that the space $(Lip(X), \Vert . \Vert_{qL})$ is complete.
\end{proof}

\begin{corollary}
Given $q \in [1,\infty]$, defining the product of $f,g \in Lip(X)$ as $fg(x)=f(x)g(x)$ turns $(Lip(X), \Vert . \Vert_{qL})$ into a complete normed algebra whose product is continuous, i.e. a Gelfand algebra.
\end{corollary}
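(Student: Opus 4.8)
The plan is to verify the three defining features of a Gelfand algebra in turn: that $\Lip(X)$ is closed under pointwise multiplication, that the resulting algebra is complete, and that its product is continuous. Completeness is already supplied by Proposition~\ref{prop_completeness}, and commutativity, associativity, distributivity, and the fact that the constant function $1$ is a unit are all immediate from the pointwise definition. So the real content is the two algebraic estimates that give closure and continuity, and I would obtain both from a single submultiplicative-type bound for $\Vert.\Vert_{qL}$.

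First I would bound the two constituents of the $q$-Lipschitz norm of a product separately. For the sup norm, submultiplicativity $\Vert fg\Vert_\infty \leq \Vert f\Vert_\infty\Vert g\Vert_\infty$ is clear. For the Lipschitz seminorm I would use the Leibniz-type splitting
\[
f(x)g(x) - f(y)g(y) = f(x)\bigl(g(x)-g(y)\bigr) + g(y)\bigl(f(x)-f(y)\bigr),
\]
which, after taking absolute values, dividing by $d(x,y)$, and taking the supremum, yields $\Vert fg\Vert_L \leq \Vert f\Vert_\infty\Vert g\Vert_L + \Vert f\Vert_L\Vert g\Vert_\infty$. In particular these two bounds are finite, so $\Vert fg\Vert_{qL}<\infty$, i.e. $fg \in \Lip(X)$, and the space is closed under the product.

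The main step is then to combine these estimates. Writing $a_1 = \Vert f\Vert_L$, $a_2 = \Vert f\Vert_\infty$, $b_1 = \Vert g\Vert_L$, $b_2 = \Vert g\Vert_\infty$, the above give $\Vert fg\Vert_L \leq a_2 b_1 + a_1 b_2$ and $\Vert fg\Vert_\infty \leq a_2 b_2$. Applying the elementary convexity inequality $(u+v)^q \leq 2^{q-1}(u^q + v^q)$ valid for $q \in [1,\infty)$, I would estimate
\[
\Vert fg\Vert_L^q + \Vert fg\Vert_\infty^q \leq 2^{q-1}\bigl(a_2^q b_1^q + a_1^q b_2^q\bigr) + a_2^q b_2^q \leq 2^{q-1}\bigl(a_1^q + a_2^q\bigr)\bigl(b_1^q + b_2^q\bigr),
\]
whence $\Vert fg\Vert_{qL} \leq 2^{1-\frac{1}{q}}\Vert f\Vert_{qL}\Vert g\Vert_{qL}$. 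The limiting case $q=\infty$, where the constant becomes $2$, I would treat directly by replacing the $q$-sums with maxima (or by passing to the limit). This bound makes the bilinear product map continuous, and together with completeness it exhibits $(\Lip(X), \Vert.\Vert_{qL})$ as a Gelfand algebra. The only mildly delicate point is the bookkeeping in the second inequality of the display: it holds because the full expansion of $(a_1^q + a_2^q)(b_1^q + b_2^q)$ contains the additional nonnegative term $a_1^q b_1^q$ and because $2^{q-1} \geq 1$ for $q \geq 1$, so that the coefficient $1$ in front of $a_2^q b_2^q$ is absorbed. Everything else is routine.
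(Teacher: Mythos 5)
Your proof is correct, but it takes a genuinely different route from the paper. The paper's proof is a two-line reduction: it cites the known fact (Weaver, \emph{Lipschitz Algebras}, Section~7.1) that the statement holds for the endpoint cases $q=1$ and $q=\infty$, and then transfers it to every other $q$ using the equivalence of the norms $\Vert . \Vert_{qL}$ (Proposition~\ref{prop_equivalent_norms}) together with completeness (Proposition~\ref{prop_completeness}); continuity of the product is a topological property, so it survives passage to an equivalent norm. You instead prove everything from scratch: the Leibniz splitting gives $\Vert fg \Vert_L \leq \Vert f \Vert_\infty \Vert g \Vert_L + \Vert f \Vert_L \Vert g \Vert_\infty$, and combining this with $\Vert fg \Vert_\infty \leq \Vert f \Vert_\infty \Vert g \Vert_\infty$ via the convexity inequality $(u+v)^q \leq 2^{q-1}(u^q+v^q)$ yields the explicit bound $\Vert fg \Vert_{qL} \leq 2^{1-\frac{1}{q}} \Vert f \Vert_{qL} \Vert g \Vert_{qL}$; your bookkeeping for absorbing the cross terms into $(a_1^q+a_2^q)(b_1^q+b_2^q)$ is right, and the $q=\infty$ case by maxima is fine. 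What your approach buys: it is self-contained (no appeal to Weaver), it treats all $q$ uniformly rather than by transfer from the endpoints, and it produces an explicit constant, which the equivalence-of-norms argument cannot give. In particular, for $q=1$ your constant is $2^{0}=1$, so you recover that $(\Lip(X), \Vert . \Vert_{1L})$ is submultiplicative, i.e. a Banach algebra in the strict sense --- strictly more than the corollary claims. What the paper's approach buys is brevity and the avoidance of any computation, at the cost of leaning on an external reference and of losing quantitative information.
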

\begin{proof}
The result is known for $q=1$ and $q=\infty$, see \cite[Section~7.1]{Weaver2018}. The result follows by the equivalence of $q$-Lipschitz norms by Proposition~\ref{prop_equivalent_norms} and the completeness of $(Lip(X), \Vert . \Vert_{qL})$ by Proposition~\ref{prop_completeness}.
\end{proof}

\section{Duality of $p$-Kantorovich and $q$-Lipschitz spaces}

The following proposition is needed to prove duality.

\begin{proposition} \label{prop_dense}
Given $p \in [1,\infty]$, the set of all measures with finite support is dense in $(\M(X),\Vert.\Vert_{pK})$.
\end{proposition}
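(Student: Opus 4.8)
The plan is to exploit that the $p$-Kantorovich norm is dominated by the Kantorovich-Rubinstein norm on zero-charge measures, which reduces the whole statement to a single discretization estimate carried out in $\Vert \cdot \Vert_{KR}$. Indeed, for any $\eta \in \M(X,0)$, taking $\xi = \eta$ in the infimum defining $\Vert \eta \Vert_{pK}$ yields $\Vert \eta \Vert_{pK} \leq \Vert \eta \Vert_{KR}$ for every $p \in [1,\infty]$ (for $p=\infty$ the same choice gives $\max\{\Vert\eta\Vert_{KR},0\}$). This makes the argument uniform in $p$ and sidesteps any appeal to Proposition~\ref{prop_equivalent_norms}.

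Fix $\mu \in \M(X)$ and $\epsilon > 0$. First I would use compactness of $X$ to build a finite Borel partition $A_1,\dots,A_n$ of $X$ with each nonempty $A_i$ of diameter at most $\epsilon$: cover $X$ by finitely many balls of radius $\epsilon/2$ and disjointify them. Choosing a point $x_i \in A_i$ in each piece, I define the discretization
\[
\nu = \sum_{i=1}^n \mu(A_i)\,\delta_{x_i},
\]
a finitely supported (signed) measure with $\nu(X) = \mu(X)$, so that $\mu - \nu \in \M(X,0)$ and the reduction above applies.

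The crux is to estimate $\Vert \mu - \nu \Vert_{KR}$. Using the dual representation $\Vert \eta \Vert_{KR} = \sup_{\Vert f \Vert_L \leq 1} \int f\,d\eta$ recalled in the introduction, for any $f$ with $\Vert f \Vert_L \leq 1$ one has, since $\int_{A_i} f(x_i)\,d\mu(x) = \mu(A_i)f(x_i)$,
\[
\int f\,d(\mu-\nu) = \sum_{i=1}^n \int_{A_i} \bigl( f(x) - f(x_i) \bigr)\,d\mu(x).
\]
Because $|f(x)-f(x_i)| \leq d(x,x_i) \leq \epsilon$ for $x \in A_i$, bounding against $|\mu|$ gives
\[
\Bigl| \int f\,d(\mu-\nu) \Bigr| \leq \sum_{i=1}^n \int_{A_i} d(x,x_i)\,d|\mu|(x) \leq \epsilon\,|\mu|(X) = \epsilon \Vert \mu \Vert_{TV}.
\]
Taking the supremum over $f$ yields $\Vert \mu - \nu \Vert_{KR} \leq \epsilon \Vert \mu \Vert_{TV}$, hence $\Vert \mu - \nu \Vert_{pK} \leq \Vert \mu - \nu \Vert_{KR} \leq \epsilon \Vert \mu \Vert_{TV}$. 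As $\epsilon$ is arbitrary, finitely supported measures are dense.

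The main obstacle is the per-cell control of the discretization error: one must choose the approximating support fine enough that every Lipschitz test function varies little across each cell, and it is exactly the diameter bound that supplies this. If one prefers not to invoke the duality, the same estimate can be obtained more explicitly by constructing, on each cell, a transport plan moving the mass of the restriction of $\mu$ to $A_i$ onto the point $x_i$ at cost at most $\epsilon\,|\mu|(A_i)$ and summing; the duality route is cleaner, but this fallback uses only the definition of $\Vert \cdot \Vert_{KR}$.
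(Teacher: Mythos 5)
Your proof is correct, but it takes a genuinely different route from the paper's. The paper's proof is a two-line reduction by citation: it invokes \cite[Lemma~2]{Hanin1999}, which states that finitely supported measures are dense in $(\M(X),\Vert.\Vert_{1K})$, and then transfers this to arbitrary $p$ via the equivalence of $\Vert.\Vert_{1K}$ and $\Vert.\Vert_{pK}$ (Proposition~\ref{prop_equivalent_norms}), since equivalent norms generate the same topology. You instead give a self-contained discretization argument: partition $X$ into Borel cells of diameter at most $\epsilon$, collapse the $\mu$-mass of each cell onto a chosen point to get a finitely supported $\nu$ with $\mu-\nu \in \M(X,0)$, and then bound $\Vert \mu-\nu \Vert_{pK} \leq \Vert \mu-\nu \Vert_{KR} \leq \epsilon \Vert \mu \Vert_{TV}$. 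All the steps check out: taking $\xi = \eta$ in \eqref{p_norm} does give $\Vert \eta \Vert_{pK} \leq \Vert \eta \Vert_{KR}$ for zero-charge $\eta$ (including $p=\infty$), the disjointified cover consists of Borel sets, and the per-cell estimate $\vert f(x)-f(x_i)\vert \leq \epsilon$ is exactly what the diameter bound provides. One small caveat: the dual formula $\Vert \eta \Vert_{KR} = \sup_{\Vert f \Vert_L \leq 1} \int f \, d\eta$ is not literally stated in the introduction (only the identification of the dual space is), so strictly speaking you are importing the classical Kantorovich--Rubinstein duality theorem; but your fallback via explicit per-cell transport plans (sending $\mu_{\pm}\vert_{A_i}$ to or from $x_i$ at cost at most $\epsilon\,\vert\mu\vert(A_i)$) closes even that gap using only the primal definition. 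What your approach buys: it is uniform in $p$, needs neither the external citation nor Proposition~\ref{prop_equivalent_norms}, and yields a quantitative rate $\epsilon \Vert \mu \Vert_{TV}$ --- in effect you have reproved Hanin's lemma rather than cited it. What the paper's approach buys: brevity, by localizing all analytic work in results already on hand.
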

\begin{proof}
By \cite[Lemma~2]{Hanin1999}, the set of all measures with finite support is dense in $(\M(X),\Vert.\Vert_{1K})$. Since the norms $\Vert.\Vert_{1K}$ and $\Vert.\Vert_{pK}$ are equivalent for any $p \in [1,\infty]$ by Proposition~\ref{prop_equivalent_norms}, they generate the same topology, implying the proposition.
\end{proof}

We are going to apply techniques of convex analysis to show that the dual of the $p$-Kantorovich space can be identified with the $q$-Lipschitz space if $\frac{1}{p} + \frac{1}{q} = 1$.

\begin{theorem} \label{prop_p_q_duality}
Given Hölder conjugates $p,q \in [1,\infty]$, for any $f \in Lip(X)$ the functional $u_f : (\mathcal{M}(X),\Vert.\Vert_{pK}) \to \R$ defined by $u_f(\mu) = \int f d\mu$ is linear and continuous with $\Vert u_f \Vert_{pK}^* = \Vert f \Vert_{qL}$. Moreover, every continuous linear functional $v$ on $(\mathcal{M}(X), \Vert . \Vert_{pK})$ is of the form  $v(\mu)=u_f(\mu)$ for a uniquely determined function $f \in Lip(X)$ with $\Vert v \Vert_{pK}^* = \Vert f \Vert_{qL}$. Consequently, the mapping $(f \to u_f)$ is an isometric isomorphism of $(Lip(X), \Vert . \Vert_{qL})$ onto the topological dual $(\mathcal{M}(X), \Vert . \Vert_{pK})^*$, i.e.
\begin{equation}
(Lip(X), \Vert . \Vert_{qL}) \cong (\mathcal{M}(X), \Vert . \Vert_{pK})^*.
\end{equation}
\end{theorem}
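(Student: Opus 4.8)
The plan is to establish the three claims in turn: first the norm identity $\Vert u_f \Vert_{pK}^* = \Vert f \Vert_{qL}$, then the recovery of a representing function from an arbitrary continuous functional, and finally the assembly into an isometric isomorphism. The inequality $\Vert u_f \Vert_{pK}^* \le \Vert f \Vert_{qL}$ (which already shows $u_f$ is well-defined and continuous) is immediate from Proposition~\ref{prop_separating}, since $\vert u_f(\mu) \vert = \vert \int f d\mu \vert \le \Vert \mu \Vert_{pK} \Vert f \Vert_{qL}$ for every $\mu \in \M(X)$.

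For the reverse inequality I would exhibit test measures making the ratio $\vert u_f(\mu) \vert / \Vert \mu \Vert_{pK}$ approach $\Vert f \Vert_{qL}$. Fix $\epsilon > 0$. By compactness of $X$ choose $z \in X$ with $\vert f(z) \vert = \Vert f \Vert_\infty$, and choose $x \ne y$ with $(f(x)-f(y))/d(x,y) > \Vert f \Vert_L - \epsilon$; after adjusting signs (swapping $x,y$ or negating $z$) both quantities may be taken nonnegative. Set $\xi = (\delta_x - \delta_y)/d(x,y) \in \M(X,0)$, so $\Vert \xi \Vert_{KR} \le 1$ and $\int f d\xi > \Vert f \Vert_L - \epsilon$, and set $\eta = \pm \delta_z$, so $\Vert \eta \Vert_{TV} = 1$ and $\int f d\eta = \Vert f \Vert_\infty$. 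For scalars $a,b \ge 0$ put $\mu = a\xi + b\eta$; choosing $\zeta = a\xi \in \M(X,0)$ in the defining infimum gives $\Vert \mu \Vert_{pK} \le (a^p + b^p)^{\frac{1}{p}}$, while $u_f(\mu) \ge a(\Vert f \Vert_L - \epsilon) + b \Vert f \Vert_\infty$. The equality case of Hölder's inequality on $\R^2$ — realized by taking $(a,b)$ proportional to $((\Vert f \Vert_L - \epsilon)^{q/p}, \Vert f \Vert_\infty^{q/p})$ — then yields $\Vert u_f \Vert_{pK}^* \ge ((\Vert f \Vert_L - \epsilon)^q + \Vert f \Vert_\infty^q)^{\frac{1}{q}}$, and letting $\epsilon \to 0$ finishes the first claim. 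The endpoints $p,q \in \{1,\infty\}$ are covered by the usual conventions: for $q = \infty$ one tests against whichever of $\xi, \eta$ gives the larger pairing, and for $q = 1$ one takes $a = b = 1$.

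For the second claim, given a continuous linear $v$, define $f(x) = v(\delta_x)$. Since $\Vert \delta_x \Vert_{pK} \le \Vert \delta_x \Vert_{TV} = 1$ and $\Vert \delta_x - \delta_y \Vert_{pK} \le \Vert \delta_x - \delta_y \Vert_{KR} = d(x,y)$ (each from an obvious choice in the defining infimum), continuity of $v$ gives $\vert f(x) \vert \le \Vert v \Vert_{pK}^*$ and $\vert f(x) - f(y) \vert \le \Vert v \Vert_{pK}^* d(x,y)$, so $f \in \Lip(X)$ with $\Vert f \Vert_\infty, \Vert f \Vert_L \le \Vert v \Vert_{pK}^*$. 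By linearity $v$ and $u_f$ agree on every finitely supported measure; since both are continuous and such measures are dense by Proposition~\ref{prop_dense}, they agree everywhere, so $v = u_f$. Uniqueness follows because $u_f(\delta_x) = f(x)$ forces $f$ to be determined by $v$, and $\Vert v \Vert_{pK}^* = \Vert u_f \Vert_{pK}^* = \Vert f \Vert_{qL}$ by the first claim.

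Finally, the map $f \mapsto u_f$ is linear, injective (as $u_f = 0$ forces $f(x) = u_f(\delta_x) = 0$), surjective by the second claim, and norm-preserving by the first, hence an isometric isomorphism onto the dual. I expect the main obstacle to be the reverse inequality in the first claim: combining a Kantorovich-Rubinstein test measure with a Dirac mass in the correct proportion so that the decomposition in the infimum defining $\Vert . \Vert_{pK}$ is nearly optimal and the equality case of Hölder's inequality is attained, while keeping the sign bookkeeping and the $p,q \in \{1,\infty\}$ endpoints under control.
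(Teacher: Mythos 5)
Your proposal is correct, but it reaches the key identity $\Vert u_f \Vert_{pK}^* = \Vert f \Vert_{qL}$ by a genuinely different route than the paper. The paper's proof is convex-analytic: working in the dual pairing of $(\M(X),\Vert.\Vert_{1K})$ with $(\Lip(X),\Vert.\Vert_{\infty L})$ given by Hanin's duality theorem, it writes the indicator $\iota_p$ of the unit ball of $\Vert.\Vert_{pK}$ as the image under the addition map $A(\xi_1,\xi_2)=\xi_1+\xi_2$ of the composition $G \circ H$, where $H(\xi_1,\xi_2)=(\Vert \xi_1 \Vert_{KR},\Vert \xi_2 \Vert_{TV})$ and $G$ is the indicator of the $\ell^p$ unit ball of $\R^2$, and then computes $\iota_p^* = \varphi^* \circ A^* = \Vert . \Vert_{qL}$ via Zalinescu's theorems on conjugates of compositions and of image functions. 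You instead prove the hard inequality $\Vert u_f \Vert_{pK}^* \geq \Vert f \Vert_{qL}$ by hand, testing against $\mu = a\xi + b\eta$ with $\xi = (\delta_x-\delta_y)/d(x,y)$ and $\eta = \pm\delta_z$ and invoking the equality case of Hölder's inequality in $\R^2$; this is sound — the two sign adjustments are independent of each other, the bound $\Vert a\xi + b\eta \Vert_{pK} \leq (a^p+b^p)^{\frac{1}{p}}$ follows from the choice $\zeta = a\xi$ in the defining infimum, and your separate treatment of the endpoints $q \in \{1,\infty\}$ is right. What your approach buys: it is elementary and self-contained, avoiding both Hanin's duality theorem and the Zalinescu machinery (Hanin enters only through the density of finitely supported measures, which both proofs use identically, and in the same way, for surjectivity and uniqueness). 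What the paper's approach buys: the conjugate computation is exact rather than approximate (a genuine minimum, no $\epsilon$), and it exhibits $\Vert.\Vert_{pK}$ and $\Vert.\Vert_{qL}$ as conjugate functions inside a systematic framework that explains structurally why the Hölder-conjugate pairing appears. One cosmetic caveat in your argument: when $\Vert f \Vert_L < \epsilon$ (in particular when $f$ is constant) you should simply take $a=0$ rather than raise a negative number to the power $q/p$; this degenerate case costs nothing but deserves a sentence.
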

\begin{proof}
It follows from \eqref{separating} that $u_f$ is a bounded and linear functional on $(\mathcal{M}(X), \Vert . \Vert_{pK})$.

Consider the duality of $(\mathcal{M}(X), \Vert . \Vert_{1K})$ and $(\Lip(X),\Vert.\Vert_{\infty L})$ given by \cite[Theorem~1]{Hanin1999}. For $p \in [1,\infty]$, let the indicators $\iota_p : (\mathcal{M}(X), \Vert . \Vert_{1K}) \to \overline{\R}$ be defined as
\[
\iota_p(\mu) = \begin{cases}
0 \text{ if } \Vert \mu \Vert_{pK} \leq 1 \\
\infty \text{ otherwise }
\end{cases}.
\]
Their convex conjugates $\iota_p^* : (\Lip(X),\Vert.\Vert_{\infty L}) \to \overline{\R}$ are defined as
\[
\iota_p^*(f) = \sup_{\mu \in \M(X)}\left\{ \int f d\mu - \iota_p(\mu) \right\},
\]
so that $\iota_p^*(f) = \sup_{\Vert \mu \Vert_{pK} \leq 1}\left\{ \int f d\mu \right\} = \Vert u_f \Vert_{pK}^*$ is exactly the dual norm of $u_f$. We claim that $\iota_p^*(f) = \Vert f \Vert_{qL}$, which would prove that the linear map $(f \to u_f)$ is an isometry.

Let $H : (\M(X,0),\Vert.\Vert_{KR}) \times (\M(X),\Vert.\Vert_{1K}) \to \R^2$ be defined as 
\[
H(\xi_1,\xi_2) = (\Vert \xi_1 \Vert_{KR}, \Vert \xi_2 \Vert_{TV}).
\]
Let $G : \R^2 \to \overline{\R}$ be defined as 
\[
G(x) = \begin{cases}
0 \text{ if } \Vert x \Vert_p \leq 1,\\
\infty \text{ otherwise },
\end{cases}
\]
i.e. $G$ is the indicator of the unit ball of the $l^p$ norm on $\R^2$, so that its conjugate is $G^*(y) = \Vert y \Vert_q$, i.e. the $l^q$ norm. With the usual ordering on $\R^2$, $H$ is clearly convex while $g$ is proper, convex and increasing. Then the mapping $\varphi = g \circ H$ is convex. We are going to invoke \cite[Theorem~2.8.10]{Zalinescu2002}. In the notation of the theorem, $D = Y_0 = \R^2$ in our case, so that condition (vi) of the theorem clearly holds. This implies that the conjugate $\varphi^* : (\Lip(X,x_0),\Vert.\Vert_L) \times (\Lip(X),\Vert.\Vert_{\infty L}) \to \overline{\R}$ is
\[
\varphi^*(f_1,f_2) = \min_{y \in \R_+^2}\{ (y_1 \Vert . \Vert_{KR} + y_2 \Vert . \Vert_{TV})^*(f_1,f_2) + \Vert y \Vert_q \}.
\]

By \cite[Theorem~2.3.1(v)]{Zalinescu2002}, \cite[Theorem~2.3.1(viii)]{Zalinescu2002} and the well known conjugate relations 
\[
(\xi_1 \to \Vert \xi_1 \Vert_{KR})^* = \left(f_1 \to \begin{cases}
0 \text{ if } \Vert f_1 \Vert_L \leq 1,\\
\infty \text{ otherwise}
\end{cases}
\right)
\]
and
\[
(\xi_2 \to \Vert \xi_2 \Vert_{TV})^* = \left(f_2 \to \begin{cases}
0 \text{ if } \Vert f_2 \Vert_\infty \leq 1,\\
\infty \text{ otherwise}
\end{cases}
\right),
\]
the conjugate of the mapping
\[
(x_1,x_2) \to y_1 \Vert \xi_1 \Vert_{KR} + y_2 \Vert \xi_2 \Vert_{TV}
\]
is the mapping
\[
(f_1,f_2) \to
\begin{cases}
0 \text{ if } \Vert f_1 \Vert_L \leq y_1 \text{ and } \Vert f_2 \Vert_\infty \leq y_2,\\
\infty \text{ otherwise,}
\end{cases}
\]
so that one has
\[
\varphi^*(f_1,f_2) = \min_{y \in \R_+^2, \Vert f_1 \Vert_L \leq y_1, \Vert f_2 \Vert_\infty \leq y_2}\{ \Vert y \Vert_q \} = (\Vert f_1 \Vert_L^q+\Vert f_2 \Vert_\infty^q)^{\frac{1}{q}}.
\]

Consider the linear map $A \in L((\M(X,0),\Vert.\Vert_{KR}) \times (\M(X),\Vert.\Vert_{1K}), (\M(X),\Vert.\Vert_{1K}))$ defined as
\[
A(\xi_1,\xi_2) = \xi_1 + \xi_2,
\]
which is clearly bounded, hence continuous.

Its adjoint $A^* \in L((\Lip(X),\Vert.\Vert_{\infty L}), (\Lip(X,x_0),\Vert.\Vert_L) \times (\Lip(X),\Vert.\Vert_{\infty L}))$ is given by
\[
A^* f = (f-f(x_0), f).
\]

By \cite[Theorem~2.3.1(ix)]{Zalinescu2002}, the conjugate of
\[
A\varphi(\mu) = \inf_{(\xi_1,\xi_2) \in (\M(X,0),\Vert.\Vert_{KR}) \times (\M(X),\Vert.\Vert_{1K}), \mu = A(\xi_1,\xi_2)}\{ \varphi(\xi_1,\xi_2) \} = \iota_p(\mu)
\]
is the mapping
\[
\varphi^* \circ A^*(f) = (\Vert f-f(x_0) \Vert_L^q+\Vert f \Vert_\infty^q)^{\frac{1}{q}} = \Vert f \Vert_{qL},
\]
proving the claim, so that $(f \to u_f)$ is an isometry.

To see that $(f \to u_f)$ is onto $\Lip(X)$, take any $v \in (\M(X),\Vert.\Vert_{pK})^*$ and set $f(x) = v(\delta_x)$ for all $x \in X$. One has $\vert f(x) \vert \leq \Vert v \Vert_{pK}^* \Vert \delta_x \Vert_{pK} = \Vert v \Vert_{pK}^*$ and $\vert f(x) - f(y) \vert \leq \Vert v \Vert_{pK}^* \Vert \delta_x - \delta_y \Vert_{pK} \leq \Vert v \Vert_{pK}^* d(x,y)$ for any $x,y \in X$, so that $f \in \Lip(X)$. One has $u_f(\delta_x) = v(\delta_x)$, so that by linearity, $u_f(\mu) = v(\mu)$ for any $\mu \in \M(X)$ with finite support. Since such measures are dense in $(\M(X),\Vert.\Vert_{pK})$ by Proposition~\ref{prop_dense}, one has $u_f=v$, completing the proof.
\end{proof}

We get the following dual representation of $\Vert . \Vert_{pK}$ as a corollary.

\begin{corollary} \label{prop_dual_representation}
Given $\mu \in \M(X)$, one has the dual representation
\begin{equation}
\Vert \mu \Vert_{pK} = \sup_{f \in Lip(X), \Vert f \Vert_{qL} \leq 1}\left\{ \int f d\mu \right\},
\end{equation}
and there exists $f_* \in Lip(X)$ such that $\Vert f_* \Vert_{qL} = 1$ and $\int f_* d\mu = \Vert \mu \Vert_{pK}$.
\end{corollary}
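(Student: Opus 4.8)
The plan is to deduce this entirely from the isometric isomorphism established in Theorem~\ref{prop_p_q_duality}, combined with the standard Hahn-Banach description of the norm on a normed space in terms of its dual. Recall that for any element $x$ of a normed space $E$ one has $\Vert x \Vert = \sup_{v \in E^*, \Vert v \Vert^* \leq 1}\{ v(x) \}$, and moreover this supremum is attained: by the Hahn-Banach theorem there exists a norming functional $v \in E^*$ with $\Vert v \Vert^* = 1$ and $v(x) = \Vert x \Vert$ (when $x \neq 0$; the case $x=0$ is trivial). The whole argument is then a matter of transporting this general fact through the identification $f \mapsto u_f$.

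First I would apply the above with $E = (\M(X),\Vert.\Vert_{pK})$ and $x = \mu$, giving $\Vert \mu \Vert_{pK} = \sup_{\Vert v \Vert_{pK}^* \leq 1}\{ v(\mu) \}$. By Theorem~\ref{prop_p_q_duality}, every $v \in (\M(X),\Vert.\Vert_{pK})^*$ is of the form $u_f$ for a unique $f \in Lip(X)$, and the map $f \mapsto u_f$ is an isometry, so $\Vert v \Vert_{pK}^* = \Vert f \Vert_{qL}$. Hence the constraint $\Vert v \Vert_{pK}^* \leq 1$ is equivalent to $\Vert f \Vert_{qL} \leq 1$, and since $v(\mu) = u_f(\mu) = \int f \, d\mu$, substituting yields the claimed dual representation
\[
\Vert \mu \Vert_{pK} = \sup_{f \in Lip(X), \Vert f \Vert_{qL} \leq 1}\left\{ \int f \, d\mu \right\}.
\]

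For the attainment, I would assume $\mu \neq 0$ and take the norming functional $v$ furnished by Hahn-Banach, with $\Vert v \Vert_{pK}^* = 1$ and $v(\mu) = \Vert \mu \Vert_{pK}$. Writing $v = u_{f_*}$ via the isomorphism gives $\Vert f_* \Vert_{qL} = \Vert v \Vert_{pK}^* = 1$ and $\int f_* \, d\mu = u_{f_*}(\mu) = v(\mu) = \Vert \mu \Vert_{pK}$, as required; when $\mu = 0$ any $f_*$ of unit $q$-Lipschitz norm trivially works. Since each step is a direct translation through the isometric isomorphism, I do not expect a genuine obstacle: the only point requiring care is the passage from supremum to maximum, which rests on the existence of a norm-attaining functional, i.e. precisely the Hahn-Banach theorem.
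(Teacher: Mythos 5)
Your proposal is correct and matches the paper's own argument, which simply cites Theorem~\ref{prop_p_q_duality} together with the Hahn-Banach theorem; you have merely spelled out the standard transport of the norming-functional fact through the isometric isomorphism $f \mapsto u_f$, including the routine $\mu = 0$ case. No gaps, and no genuinely different route.
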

\begin{proof}
This follows from Proposition~\ref{prop_p_q_duality} and the Hahn-Banach theorem.
\end{proof}

\section{Optimality conditions}

The following proposition shows that the infimum in \eqref{p_norm} is always attained.

\begin{proposition} \label{prop_optimal_xi}
For any $\mu \in \M(X)$, there exists $\xi_* \in \M(X,0)$ such that $\Vert \mu \Vert_{pK} = (\Vert \xi_* \Vert_{KR}^p + \Vert \mu - \xi_* \Vert_{TV}^p)^{\frac{1}{p}}$.
\end{proposition}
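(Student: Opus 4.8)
The plan is to obtain $\xi_*$ by the direct method of the calculus of variations applied to the functional
\[
F(\xi) = (\Vert \xi \Vert_{KR}^p + \Vert \mu - \xi \Vert_{TV}^p)^{\frac{1}{p}}, \qquad \xi \in \M(X,0),
\]
whose infimum is by definition $\Vert \mu \Vert_{pK}$. That is, I would take a minimizing sequence, extract a weak-$*$ convergent subsequence by a compactness argument, and conclude using weak-$*$ lower semicontinuity of $F$ together with the fact that the constraint set $\M(X,0)$ is weak-$*$ closed.

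First I would establish coercivity. Let $(\xi_n) \subset \M(X,0)$ satisfy $F(\xi_n) \to \Vert \mu \Vert_{pK}$. Since $\Vert \mu - \xi_n \Vert_{TV} \leq F(\xi_n)$ (with the limiting case $p=\infty$ read as $\max$), the total variations $\Vert \mu - \xi_n \Vert_{TV}$ are bounded, and hence so are $\Vert \xi_n \Vert_{TV} \leq \Vert \mu \Vert_{TV} + \Vert \mu - \xi_n \Vert_{TV}$. Because $X$ is a compact metric space, $C(X)$ is separable and $\M(X) \cong C(X)^*$ by the Riesz representation theorem, so by the Banach--Alaoglu theorem and separability the bounded sequence $(\xi_n)$ has a subsequence converging weak-$*$ to some $\xi_* \in \M(X)$. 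Testing against the constant function $1 \in C(X)$ gives $\xi_*(X) = \lim_k \xi_{n_k}(X) = 0$, so $\xi_* \in \M(X,0)$.

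Next I would verify that $F$ is weak-$*$ lower semicontinuous. The total variation norm is precisely the dual norm on $C(X)^*$, so $\xi \mapsto \Vert \mu - \xi \Vert_{TV}$ is weak-$*$ lsc, being the composition of the weak-$*$ continuous affine map $\xi \mapsto \mu - \xi$ with a dual norm. For the Kantorovich--Rubinstein term I would invoke the Kantorovich--Rubinstein duality $\Vert \xi \Vert_{KR} = \sup_{\Vert f \Vert_L \leq 1} \int f\, d\xi$ already implicit in the duality described in the introduction: as a supremum of the weak-$*$ continuous functionals $\xi \mapsto \int f\, d\xi$ (each Lipschitz $f$ being continuous), $\Vert \cdot \Vert_{KR}$ is weak-$*$ lsc. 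Finally, since $(a,b) \mapsto (a^p + b^p)^{\frac{1}{p}}$ is continuous and nondecreasing in each argument on $[0,\infty)^2$, composing it with the two nonnegative lsc functions $\Vert \cdot \Vert_{KR}$ and $\Vert \mu - \cdot \Vert_{TV}$ preserves lower semicontinuity, whence $F$ is weak-$*$ lsc.

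Combining these facts along the convergent subsequence yields
\[
F(\xi_*) \leq \liminf_{k \to \infty} F(\xi_{n_k}) = \Vert \mu \Vert_{pK} \leq F(\xi_*),
\]
the last inequality being the definition of the infimum, so that $F(\xi_*) = \Vert \mu \Vert_{pK}$ and $\xi_*$ is the asserted minimizer. I expect the lower-semicontinuity step to be the main point requiring care, specifically the use of Kantorovich--Rubinstein duality to obtain weak-$*$ lower semicontinuity of $\Vert \cdot \Vert_{KR}$; the coercivity and compactness steps are routine once $X$ is compact metric and $\M(X) \cong C(X)^*$ is invoked.
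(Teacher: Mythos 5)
Your proposal is correct and follows essentially the same route as the paper: a minimizing sequence, weak-$*$ compactness of total-variation balls (the paper cites this directly, you derive it from Banach--Alaoglu plus separability of $C(X)$), and weak-$*$ lower semicontinuity of both terms obtained by testing against Lipschitz functions, i.e. Kantorovich--Rubinstein duality and the dual-norm characterization of $\Vert\cdot\Vert_{TV}$. The only cosmetic difference is that the paper writes the lower-semicontinuity step concretely via $\liminf$'s rather than as an abstract ``supremum of weak-$*$ continuous functionals'' argument.
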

\begin{proof}
The following is an adaptation of the proof of \cite[Proposition~6]{Hanin1999}. Let $\xi_n$ be a sequence in $\M(X,0)$ such that $\Vert \mu \Vert_{pK} = \lim_{n\to\infty}(\Vert \xi_n \Vert_{KR}^p + \Vert \mu - \xi_n \Vert_{TV}^p)^{\frac{1}{p}}$. Without loss of generality one can assume that $\Vert \xi_n \Vert_{TV} \leq R$ for all $n$ for some constant $R > 0$. By the weak-* compactness of the $\Vert.\Vert_{TV}$-ball of radius $R$ by \cite[Theorem~8.4.25]{Cobzasetal2019}, up to a subsequence, $\xi_n$ weak-* converges to some $\xi_* \in \M(X,0)$. Let $a = \liminf_{n \to \infty} \Vert \xi_n \Vert_{KR}$ and $b = \liminf_{n \to \infty} \Vert \mu - \xi_n \Vert_{TV}$, so that $\Vert \mu \Vert_{pK} \geq (a^p+b^p)^{\frac{1}{p}}$. For every $n$ and any $f \in \Lip(X)$ with $\Vert f \Vert_L \leq 1$ one has $\left\vert \int f d\xi_n \right\vert \leq \Vert \xi_n \Vert_{KR}$, so that $\left\vert \int f d\xi_* \right\vert \leq a$, hence $\Vert \xi_* \Vert_{KR} \leq a$. Similarly, for every $n$ and any $f \in \Lip(X)$ with $\Vert f \Vert_\infty \leq 1$ one has $\left\vert \int f d(\mu-\xi_n) \right\vert \leq \Vert \mu-\xi_n \Vert_{TV}$, so that $\left\vert \int f d(\mu-\xi_*) \right\vert \leq b$, hence $\Vert \mu-\xi_* \Vert_{TV} \leq b$. Thus $(\Vert \xi_* \Vert_{KR}^p + \Vert \mu - \xi_* \Vert_{TV}^p)^{\frac{1}{p}} \leq (a^p+b^p)^{\frac{1}{p}} \leq \Vert \mu \Vert_p$, implying the proposition.
\end{proof}

It is well known that for any $\xi \in \M(X,0)$ there exists $\pi_* \in \M(X \times X)$ such that $\pi_*(X \times \cdot) - \pi_*(\cdot \times X) = \xi$ and $\Vert \xi \Vert_{KR} = \int d(x,y) d\pi_*(x,y)$ (see e.g. \cite[Section~VIII.4]{Kantorovichetal1982}). By this and Proposition~\ref{prop_optimal_xi}, there exists an optimal pair $\xi_* \in \M(X,0)$, $\pi_* \in \M(X \times X)$ such that $\pi_*(X \times \cdot) - \pi_*(\cdot \times X) = \xi_*$ and
\[
\Vert \mu \Vert_{pK} = \left( \left(\int d(x,y) d\pi_*(x,y)\right)^p + \Vert \xi_* \Vert_{TV}^p \right)^{\frac{1}{p}},
\]
while by Proposition~\ref{prop_dual_representation} there exists an optimal $f_* \in Lip(X)$ such that $\Vert \mu \Vert_{pK} = \int f_* d\mu$ and $\Vert f \Vert_{qL} = 1$. The following proposition characterizes such optimal variables, generalizing \cite[Proposition~7]{Hanin1999}.

\begin{proposition}
Given $\mu \in \M(X)$, the measures $\xi_* \in \M(X,0)$, $\pi_* \in \M(X \times X)$ with $\pi_*(X \times \cdot) - \pi_*(\cdot \times X) = \xi_*$ are optimal if and only of there exists a $f_* \in Lip(X)$ such that the conditions
\begin{enumerate}[(i)]
\item $\Vert f_* \Vert_{qL} = 1$,
\item $\Vert f_* \Vert_L \Vert \xi_* \Vert_{KR} + \Vert f_* \Vert_\infty \Vert \mu - \xi_* \Vert_{TV} = (\Vert \xi_* \Vert_{KR}^p + \Vert \mu - \xi_* \Vert_{TV}^p)^{\frac{1}{p}}$,
\item $f_*(x)-f_*(y) = \Vert f_* \Vert_L d(x,y)$ if $(x,y) \in \support(\pi_*)$ and
\item $f_*(x) = \pm \Vert f_* \Vert_\infty$ if $x \in \support((\mu-\xi_*)_\pm)$
\end{enumerate}
are satisfied. In this case, $f_*$ is optimal, i.e. $\int f_* d\mu = \Vert \mu \Vert_{pK}$.
\end{proposition}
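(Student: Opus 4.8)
My plan is to reduce everything to the equality analysis of a single master chain of inequalities. As in the proof of Proposition~\ref{prop_separating}, for any $f \in \Lip(X)$ with $\Vert f \Vert_{qL} = 1$ and any $\xi \in \M(X,0)$, splitting $\mu = \xi + (\mu - \xi)$, applying the triangle inequality together with the Kantorovich-Rubinstein and total variation dualities, and then the Hölder inequality on $\R^2$, one obtains
\[
\int f d\mu \le \Vert f \Vert_L \Vert \xi \Vert_{KR} + \Vert f \Vert_\infty \Vert \mu - \xi \Vert_{TV} \le (\Vert \xi \Vert_{KR}^p + \Vert \mu - \xi \Vert_{TV}^p)^{\frac{1}{p}}.
\]
When $\xi = \xi_*$ is optimal the right-hand side equals $\Vert \mu \Vert_{pK}$, and by Corollary~\ref{prop_dual_representation} the left-hand side equals $\Vert \mu \Vert_{pK}$ exactly for an optimal $f = f_*$. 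The four conditions (i)--(iv) will turn out to be precisely the equality cases of the four inequalities hidden in this chain (the triangle inequality, the two dualities, and Hölder), with condition (iii) additionally encoding optimality of the plan $\pi_*$.

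For the ``only if'' direction, suppose $\xi_*, \pi_*$ are optimal. Corollary~\ref{prop_dual_representation} provides $f_* \in \Lip(X)$ with $\Vert f_* \Vert_{qL} = 1$, which is (i), and $\int f_* d\mu = \Vert \mu \Vert_{pK}$; optimality of $\xi_*$ makes the two ends of the chain equal, so every inequality in it is an equality. Equality in the Hölder step is condition (ii). Equality in the triangle inequality forces $\int f_* d\xi_* \ge 0$ and $\int f_* d(\mu - \xi_*) \ge 0$ and makes each of the two dualities saturate. For the total variation term, expanding against the Jordan decomposition $\mu - \xi_* = (\mu - \xi_*)_+ - (\mu - \xi_*)_-$ shows that $\int f_* d(\mu - \xi_*) = \Vert f_* \Vert_\infty \Vert \mu - \xi_* \Vert_{TV}$ is possible only if $f_* = +\Vert f_* \Vert_\infty$ on $\support((\mu - \xi_*)_+)$ and $f_* = -\Vert f_* \Vert_\infty$ on $\support((\mu - \xi_*)_-)$, which is (iv). For the Kantorovich-Rubinstein term, representing $\int f_* d\xi_*$ as the integral of a difference of values of $f_*$ against the optimal plan $\pi_*$ and using $\pi_* \ge 0$ and $\int d(x,y) d\pi_*(x,y) = \Vert \xi_* \Vert_{KR}$, saturation upgrades to the pointwise identity (iii) on $\support(\pi_*)$, with orientation fixed by $\int f_* d\xi_* \ge 0$.

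For the ``if'' direction, suppose (i)--(iv) hold for some $f_*$. Condition (iv) and the Jordan decomposition give $\int f_* d(\mu - \xi_*) = \Vert f_* \Vert_\infty \Vert \mu - \xi_* \Vert_{TV}$, while integrating the identity (iii) against $\pi_* \ge 0$ gives $\vert \int f_* d\xi_* \vert = \Vert f_* \Vert_L \int d(x,y) d\pi_*(x,y)$. Chaining the latter with the Kantorovich-Rubinstein bound $\vert \int f_* d\xi_* \vert \le \Vert f_* \Vert_L \Vert \xi_* \Vert_{KR} \le \Vert f_* \Vert_L \int d(x,y) d\pi_*(x,y)$ forces simultaneously the saturation $\vert \int f_* d\xi_* \vert = \Vert f_* \Vert_L \Vert \xi_* \Vert_{KR}$ and the optimality $\int d(x,y) d\pi_*(x,y) = \Vert \xi_* \Vert_{KR}$ of the plan. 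Since (iii) and (iv) are oriented so that both contributions are nonnegative, adding them and invoking (ii) gives $\int f_* d\mu = (\Vert \xi_* \Vert_{KR}^p + \Vert \mu - \xi_* \Vert_{TV}^p)^{\frac{1}{p}}$. Because $\int f_* d\mu \le \Vert \mu \Vert_{pK} \le (\Vert \xi_* \Vert_{KR}^p + \Vert \mu - \xi_* \Vert_{TV}^p)^{\frac{1}{p}}$ by Corollary~\ref{prop_dual_representation} and the definition of $\Vert . \Vert_{pK}$, all three quantities coincide, which establishes optimality of $\xi_*$, of $\pi_*$, and of $f_*$ at once.

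The main obstacle is the two-way passage between the scalar equalities in the chain and the measure-theoretic conditions (iii), (iv): upgrading an equality of integrals to a statement holding pointwise on a support relies on the nonnegativity of $\pi_*$, on the pointwise Lipschitz bound $\vert f_*(x) - f_*(y) \vert \le \Vert f_* \Vert_L d(x,y)$, and on the Jordan decomposition of $\mu - \xi_*$. Orientation bookkeeping (the sign of $\int f_* d\xi_*$ selecting which of $\pm \Vert f_* \Vert_L d(x,y)$ appears in (iii)) and the degenerate cases $\Vert f_* \Vert_L = 0$ or $\Vert f_* \Vert_\infty = 0$, in which the corresponding duality bound is vacuous and the associated support condition holds trivially, must be treated separately.
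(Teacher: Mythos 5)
Your proposal reproduces the paper's own proof almost step for step: both directions consist of writing the chain $\int f_* \, d\mu = \int f_* \, d\xi_* + \int f_* \, d(\mu-\xi_*) \leq \Vert f_* \Vert_L \Vert \xi_* \Vert_{KR} + \Vert f_* \Vert_\infty \Vert \mu - \xi_* \Vert_{TV} \leq (\Vert \xi_* \Vert_{KR}^p + \Vert \mu - \xi_* \Vert_{TV}^p)^{\frac{1}{p}}$ and analyzing when every link is an equality, with Corollary~\ref{prop_dual_representation} supplying $f_*$ in the ``only if'' direction, the Jordan decomposition turning saturation of the total variation bound into (iv), and integration of the pointwise bound $f_*(x)-f_*(y) \leq \Vert f_* \Vert_L d(x,y)$ against the nonnegative measure $\pi_*$ turning saturation of the Kantorovich--Rubinstein bound into (iii). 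Up to bookkeeping, this is the paper's argument, and the nondegenerate parts of both directions are correct.

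The one place you diverge is the degenerate case you flag and then defer, and this is a genuine gap, not routine work: in the ``if'' direction, optimality of $\pi_*$ is extracted (in your write-up explicitly, in the paper's implicitly) from $\Vert f_* \Vert_L \int d(x,y)\, d\pi_*(x,y) = \Vert f_* \Vert_L \Vert \xi_* \Vert_{KR}$ by cancelling $\Vert f_* \Vert_L$, which is impossible when $f_*$ is constant. Moreover the case cannot be ``treated separately'', because the claim about $\pi_*$ is then false: take $\mu \geq 0$, $\xi_* = 0$, $f_* \equiv 1$, and $\pi_* = \delta_{(a,b)} + \delta_{(b,a)}$ for distinct $a,b \in X$, a cyclic plan whose marginal difference is $0 = \xi_*$ but whose cost $2d(a,b)$ is positive. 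Then (i)--(iv) all hold (for every $p$), $\xi_*$ and $f_*$ are optimal, yet $\pi_*$ is not an optimal plan for $\xi_*$, so conditions (i)--(iv) with a constant $f_*$ can never force optimality of $\pi_*$. What your argument (and the paper's) actually proves in the ``if'' direction is that $\xi_*$ and $f_*$ are optimal in all cases, and that $\pi_*$ is optimal provided $\Vert f_* \Vert_L > 0$. Note that the paper's own proof has exactly the same unacknowledged hole, so your attempt is no less rigorous than the original; having flagged the case is to your credit, but you should have pushed it to the counterexample rather than leaving it as a to-do.
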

\begin{proof}
Let $f_* \in \Lip(X)$ be a function satisfying the above conditions. Then one has
\[
\Vert \mu \Vert_{pK} \geq \int f_* d\mu 
= \int f_* d\xi_* + \int f_* d(\mu-\xi_*) 
\]
\[
= \int f_*(x)-f_*(y) d\pi_*(x,y) + \Vert f_* \Vert_\infty\Vert \mu - \xi_* \Vert_{TV}
\]
\[
= \Vert f_* \Vert_L \int d(x,y) d\pi_*(x,y) + \Vert f_* \Vert_\infty\Vert \mu - \xi_* \Vert_{TV}
\]
\[
\geq \Vert f_* \Vert_L \Vert \xi_* \Vert_{KR} + \Vert f_* \Vert_\infty\Vert \mu - \xi_* \Vert_{TV}
\]
\[
= (\Vert \xi_* \Vert_{KR}^p + \Vert \mu - \xi_* \Vert_{TV}^p)^{\frac{1}{p}}
\geq \Vert \mu \Vert_{pK},
\]
so that the conditions are sufficient.

Now let $xi_*,\pi_*$ and $f_*$ be optimal. Clearly $(i)$ is satisfied. On the other hand, one has
\[
\Vert \mu \Vert_{pK} 
= \int f_* d\mu = \int f_* d\xi_* + \int f_* d(\mu-\xi_*) 
\]
\[
\leq \int f_*(x)-f_*(y) d\pi_*(x,y) + \Vert f_* \Vert_\infty\Vert \mu - \xi_* \Vert_{TV}
\]
\[
\leq \Vert f_* \Vert_L \int d(x,y) d\pi_*(x,y) + \Vert f_* \Vert_\infty\Vert \mu - \xi_* \Vert_{TV}
\]
\[
= \Vert f_* \Vert_L \Vert \xi_* \Vert_{KR} + \Vert f_* \Vert_\infty\Vert \mu - \xi_* \Vert_{TV}
\]
\[
\leq (\Vert f \Vert_L^q + \Vert f \Vert_\infty^q)^{\frac{1}{q}} (\Vert \xi_* \Vert_{KR}^p + \Vert \mu - \xi_* \Vert_{TV}^p)^{\frac{1}{p}}
= \Vert \mu \Vert_{pK},
\]
implying $(ii), (iii)$ and $(iv)$. This shows that the conditions are necessary as well.
\end{proof}

\section*{Acknowledgements}
The author is supported by the Hungarian National Excellence Grant 2018-1.2.1-NKP-00008 and by the Hungarian Ministry of Innovation and Technology NRDI Office within the framework of the Artificial Intelligence National Laboratory Program.



\bibliographystyle{apalike}
\bibliography{pqhkr}

\end{document}